\documentclass[10pt]{article}
\usepackage[T1]{fontenc}
\usepackage{lmodern}
\usepackage[a4paper,margin=27mm]{geometry}
\usepackage{amsmath,amssymb,amsthm,mathtools}
\usepackage{microtype}
\usepackage{enumitem}
\usepackage{xcolor}
\usepackage[colorlinks=true,linkcolor=blue!45!black,citecolor=blue!45!black,urlcolor=blue!45!black]{hyperref}

\newtheorem{theorem}{Theorem}
\newtheorem{lemma}{Lemma}

\newtheorem{proposition}{Proposition}
\theoremstyle{remark}
\newtheorem{remark}{Remark}

\hypersetup{pdftitle={Real-analytic standard families and rotational invariant curves}}
\allowdisplaybreaks[1]
\theoremstyle{definition}

\newcommand{\Si}{\operatorname{Si}}

\begin{document}

\title{Real-analytic standard families and rotational invariant curves}
\author{Salvador Addas-Zanata\thanks{The author thanks FAPESP (project 2023/07076-4) for partial financial support.}}
\date{}

\maketitle

\centerline { {\sl Instituto de Matem\'atica e Estat\'\i stica }}

\centerline {{\sl Universidade de S\~ao Paulo}}

\centerline {{\sl Rua do Mat\~ao 1010, Cidade Universit\'aria,}} 

\centerline {{\sl 05508-090 S\~ao Paulo, SP, Brazil}}

\maketitle

\begin{abstract}
In this paper we study one-parameter families of exact standard maps of the form $$\widehat{f}_{k,g}(\widehat{x},\widehat{y})
=\bigl(\widehat{x}+\widehat{y}+kg(\widehat{x}) \bmod 1,\widehat{y}+kg(\widehat{x})\bigr),$$ where $k$ is a real parameter
and $g:\mathbb{T} \rightarrow \mathbb{R}$ is real-analytic, 1-periodic and has zero mean. The most famous example is when 
$g(\widehat{x})=\frac{\sin(2\pi\widehat{x})}{2\pi}$. In this case, the closed subset $A_{\sin(2\pi\cdot)/(2\pi)}\subset \mathbb{R}$ of values of $k$ for which $\widehat{f}_{k,\sin(2\pi\cdot)/(2\pi)}$ has rotational 
invariant curves is conjectured to be a closed interval. Much has been done towards this conjecture, both numerically and analytically. But a complete proof 
has not yet been achieved.
Our approach here goes in the opposite direction: we construct a function $\theta:\mathbb{T} \rightarrow \mathbb{R}$ satisfying the
above conditions, such that $A_{\theta}$ is not an interval: It contains at least 2 different connected components, which are non-degenerate closed intervals.    
\end{abstract}

\vspace{-2em}

\vskip1truecm
\section{Introduction}

The sinusoidal standard family is a classical testing ground for the
existence and destruction of rotational invariant curves (RICs). Its planar lift is
\[
\widetilde f_{k,\sin(2\pi\cdot)/(2\pi)}(\widetilde x,\widetilde y)
=\left(\widetilde x+\widetilde y+\frac{k}{2\pi}\sin(2\pi\widetilde x),
       \widetilde y+\frac{k}{2\pi}\sin(2\pi\widetilde x)\right).
\]
It is not difficult to see that the closed set
$A_{\sin(2\pi\cdot)/(2\pi)}$ of parameters $k$ admitting a RIC is symmetric
about $0$. These essential invariant circles form barriers to vertical
transport, making the disappearance of the last RIC a central question.

At $k=0$ the map is integrable and every horizontal circle is invariant.
Moser's invariant curve theorem \cite{Moser1962}, and subsequent
developments of KAM theory, imply persistence of circles with suitable
Diophantine rotation numbers for all sufficiently small $|k|$.
In particular, $A_{\sin(2\pi\cdot)/(2\pi)}$ contains an interval around $0$.
Obtaining useful explicit sizes for such an interval requires much
sharper estimates than this qualitative statement. A rigorous
computer-assisted result of Celletti and Chierchia
\cite{CellettiChierchia1991} establishes the existence of an analytic
invariant circle with golden rotation number
$\omega_{\mathrm g}=(\sqrt5-1)/2$ for every $|k|\le 0.838$.
Their result gives
\[
[-0.838,0.838]
\subset A_{\sin(2\pi\cdot)/(2\pi)}
\]

The numerical picture is considerably more precise. Greene
\cite{Greene1979} introduced a criterion based on the residues of periodic
orbits whose rotation numbers approximate that of an invariant circle.
His computations suggest that the golden circle is the last to break,
at a critical parameter $k_{\mathrm c}\approx0.971635\ldots$.
MacKay's renormalization approach \cite{MacKay1983} provided a framework
for understanding the scaling and universality observed near breakup.
The resulting conjectural picture is that
$A_{\sin(2\pi\cdot)/(2\pi)}=[-k_{\mathrm c},k_{\mathrm c}]$:
in particular, RICs would never reappear as the positive parameter
increases beyond $k_{\mathrm c}.$

On the nonexistence side, Mather \cite{Mather1984} proved that the
standard map has no RICs for $|k|>4/3$. MacKay and Percival
\cite{MacKayPercival1985} developed rigorous converse-KAM methods and
proved nonexistence for every $|k|\ge 63/64=0.984375$.
Consequently,
\[
A_{\sin(2\pi\cdot)/(2\pi)}
\subset\left(-\frac{63}{64},\frac{63}{64}\right).
\]
Jungreis \cite{Jungreis1991} subsequently proved nonexistence at several
specified parameter values, including $k=0.9718$, much closer to the
numerically predicted transition.

Rigorous existence estimates also approach this transition.
Figueras, Haro and Luque \cite{FHL} validated an invariant circle with
golden rotation number at $k=0.9716$ using an a posteriori KAM theorem.
It is important to distinguish these assertions: existence at a
specified parameter does not by itself establish existence throughout
the interval between that parameter and $0$, and nonexistence at
$k=0.9718$ does not establish nonexistence at every larger parameter.
Thus the proximity of these existence and nonexistence certificates
does not, by itself, prove that $A_{\sin(2\pi\cdot)/(2\pi)}$ is an interval.

This distinction motivates the present work. We replace the sine by a
real-analytic, mean-zero periodic function $g$ and study the analogous
set $A_g$. Starting from Bullett's piecewise-linear example
\cite{Bullett}, we construct real-analytic standard families for which RICs persist
throughout a neighborhood of $k=4/3$, although there are no RICs on an
intervening interval of positive parameters. KAM theory also supplies
RICs throughout a neighborhood of $k=0$. The resulting set $A_g$
therefore has distinct connected components containing $0$ and $4/3$
in their interiors. This shows that the interval structure suggested
for the sinusoidal family cannot be deduced from analyticity and the
twist condition alone. There must be some mysterious property 
hidden with the sine function. 

Related numerical evidence was obtained by Wilbrink \cite{Wilbrink1987},
who studied analytic standard-like families approaching a piecewise-linear
limit. Using Greene's residue criterion, he observed disappearance and
reappearance of invariant circles with prescribed irrational rotation
numbers as the nonlinearity parameter increased. He explicitly noted that
this criterion was being used without a general proof of its validity
\cite[Section~2, p.~360]{Wilbrink1987}; the analytical calculations in
Section~5 of his paper concern the piecewise-linear limit. 

The present work gives a
rigorous construction of this type of reappearance: for a fixed real-analytic
standard family of maps, an interval in the parameter $k$ admitting no RIC of any rotation number separates two
intervals on which RICs exist.

\subsection{Notation, definitions and statement of the main theorem}

\begin{enumerate}
\item  Denote the circle as $\mathbb T=\mathbb R/\mathbb Z$ and the 2-torus
as $\mathbb T^{2}=\mathbb R^{2}/\mathbb Z^{2}.$ Points of the torus $\mathbb %
T^{2}$ are denoted as $z=(x,y),$ points of the plane as $\widetilde{z}=(%
\widetilde{x},\widetilde{y})\in \mathbb R^{2}$, and points of the vertical
cylinder as $\widehat{z}=(\widehat{x},\widehat{y})\in \mathbb T\times %
\mathbb R$. The corresponding covering projections, $\pi :\mathbb %
R^{2}\rightarrow \mathbb T\times \mathbb R,$ $\tau :$ $\mathbb T\times %
\mathbb R\rightarrow \mathbb T^{2}$ and $p:\mathbb R^{2}\rightarrow \mathbb %
T^{2}$ are given by 
\[
\pi (\widetilde{x},\widetilde{y})=(\widetilde{x}\bmod1,\widetilde{y}),\text{ 
}\tau (\widehat{x},\widehat{y})=(\widehat{x},\widehat{y}\bmod1)\text{ and }p(%
\widetilde{x},\widetilde{y})=(\widetilde{x}\bmod1,\widetilde{y}\bmod1)\text{ 
}.
\]

\item  A torus map is written $f$, its specified plane lift $\widetilde{f}$,
and its specified cylinder lift $\widehat{f}$. 

\item Let $p_{1},p_{2}:\mathbb{R}^2 \to\mathbb{R}$ denote the horizontal and vertical
projections in the plane, $p_{1}(\widetilde{x},\widetilde{y})=\widetilde{x}$ and $p_{2}(\widetilde{x},\widetilde{y})=\widetilde{y}$.
The analogous projections in the cylinder and torus are denoted in the same way.

\item  We also use tildes for real lifts of circle maps, such as $\widetilde{%
h}:\mathbb R\to \mathbb R$ lifting $h:\mathbb T\to \mathbb T$. Periodic
scalar functions retain their undecorated names; the same function may be
evaluated at a real argument or its class in $\mathbb T$. 

\item  For a continuous, $1$-periodic function $g:\mathbb R\to \mathbb R$ of
zero mean, consider the one-parameter family of homeomorphisms of the plane  
\begin{equation}
\widetilde{f}_{k,g}(\widetilde{x},\widetilde{y})=\bigl(\widetilde{x}+%
\widetilde{y}+kg(\widetilde{x}),\,\widetilde{y}+kg(\widetilde{x})\bigr)%
,\qquad k\in \mathbb R.  \label{eq:family}
\end{equation}
It induces maps of the cylinder $\widehat{f}_{k,g},$ and of the torus $%
f_{k,g}$, related by the projections above. This is the so-called standard
family of homeomorphisms associated to $g.$ A simple computation shows that $%
\widetilde{f}_{k,g}$ is an area-preserving positive twist map. The twist
property is trivial to see, because
\[
\frac{\partial (\widetilde{x}+\widetilde{y}+kg(\widetilde{x}))}{\partial 
\widetilde{y}}=1,\text{ at all }(\widetilde{x},\widetilde{y})\in \mathbb R^{2}.
\]
and the area-preservation comes from the fact that, defining $\widetilde{V}(%
\widetilde{x},\widetilde{y})=(\widetilde{x},\widetilde{y}+kg(\widetilde{x}))$
and $\widetilde{H}(\widetilde{x},\widetilde{y})=(\widetilde{x}+\widetilde{y},%
\widetilde{y}),$ the map $\widetilde{f}_{k,g}(\widetilde{x},\widetilde{y})=%
\widetilde{H}\circ \widetilde{V}(\widetilde{x},\widetilde{y}).$ And both $%
\widetilde{V}$ and $\widetilde{H}$ preserve area. The $\det (D\widetilde{H}%
\mid _{(\widetilde{x},\widetilde{y})})\equiv 1$ at all $(\widetilde{x},%
\widetilde{y})\in \mathbb R^{2}$ and $\widetilde{V}$ preserves area because it
maps each vertical line to itself by a translation, so Fubini's theorem applies. 

\item  We say that some $\widehat{f}_{k,g}:\mathbb T\times \mathbb %
R\rightarrow \mathbb T\times \mathbb R$ has a \emph{rotational invariant
curve (RIC)} $\gamma,$ if $\gamma $ is an $\widehat{f}_{k,g}$-invariant essential
simple closed curve in the cylinder. The Birkhoff Invariant Curve Theorem 
\cite{fathi} implies that $\gamma $ coincides with the graph of a continuous
function $m:\mathbb T\to\mathbb R$ (Lipschitz when $g$ is Lipschitz).

\item \label{item:parameter-set} Denote by
\[
A_{g}:=\{k\in \mathbb R:\widehat{f}_{k,g}\text{ has a rotational invariant
curve}\}.
\]
For this family, Lemma~\ref{noinvcurves} implies that $A_g$ is closed. And if 
$g\not\equiv0$, it is also bounded. Indeed, if $|k|\|g\|_\infty\ge1$,
continuity and the zero-mean condition give some $\widehat{x}_0\in\mathbb T$
such that for all integers $n,$ %and $a\varepsilon\in\{-1,1\}$ such that $kg(\widehat{x}_0)=\varepsilon$.
%Consequently,
\[
\widehat{f}_{k,g}^{\,n}(\widehat{x}_0,0)
=(\widehat{x}_0,n), \text{ or } \widehat{f}_{k,g}^{\,n}(\widehat{x}_0,0)=(\widehat{x}_0,-n).
\]
Any of the possibilities above clearly rules out a RIC. %: if one existed, its integer vertical
%translates would also be invariant, and the closed strips between successive
%translates would trap every orbit. In particular, for $k\ge0$, the condition
%$k\max g\ge1$ rules out RICs. 
Thus,
\begin{equation}
A_g\subset\left(-\frac1{\|g\|_\infty},\frac1{\|g\|_\infty}\right)
\qquad\text{if }g\not\equiv0.
\label{eq:parameter-bound}
\end{equation}
%Thus $A_g$ is compact whenever $g\not\equiv0$.

\item  For a 1-periodic scalar function $u:\mathbb R\to \mathbb R$, we
distinguish the lifted graph $\widetilde{\Gamma }_{u}=\{(\widetilde{x},u(%
\widetilde{x})):\widetilde{x}\in \mathbb R\}$ from the closed curve $%
\widehat{\Gamma }_{u}=\pi (\widetilde{\Gamma }_{u})$ in the cylinder.
Invariance of either is equivalent to invariance of the other for the
specified maps in the above family. All rotation numbers below refer to
specified degree-one lifts and are real numbers, not merely classes modulo $1
$.
\end{enumerate}

Now we are ready to state our result:

\begin{theorem}
\label{thm:main} There exists a real-analytic, $1$-periodic, mean-zero
function $\theta :$ $\mathbb R\rightarrow \mathbb R$ such that $A_{\theta }$
is not connected. A connected component of $A_{\theta }$ is a closed
interval $I_{0}$ containing $0$ in its interior and another connected
component is a closed interval $I_{4/3}$ containing $4/3$ in its interior.
\end{theorem}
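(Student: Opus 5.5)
The plan is to obtain $\theta$ as a real-analytic approximation of a piecewise-linear, mean-zero function $g_0$ of the type studied by Bullett \cite{Bullett}. For $\widehat f_{k,g_0}$ one has explicit control both of the existence of RICs for $k$ near $4/3$ and of their nonexistence at some intermediate parameter $k_*\in(0,4/3)$. The three required properties of $A_\theta$ will then be transferred from $g_0$ to $\theta$ separately: an interval around $0$, an interval around $4/3$, and a gap at $k_*$.

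\emph{Step 1 (neighbourhood of $0$).} For any real-analytic mean-zero $\theta$, the map $\widehat f_{k,\theta}$ is a small analytic perturbation of the integrable twist map $(\widehat x,\widehat y)\mapsto(\widehat x+\widehat y,\widehat y)$ when $|k|$ is small. Moser's invariant curve theorem \cite{Moser1962} then gives RICs with Diophantine rotation numbers for all $|k|<\varepsilon_0$. Hence the connected component $I_0$ of $A_\theta$ containing $0$ contains $0$ in its interior. By Lemma~\ref{noinvcurves}, $A_\theta$ is closed, so $I_0$ is a closed interval; it is bounded by \eqref{eq:parameter-bound}.

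\emph{Step 2 (the gap).} First I will verify, or extract from Bullett's analysis, a parameter $k_*\in(0,4/3)$ at which $\widehat f_{k_*,g_0}$ has a finite orbit segment going from below to above every candidate curve. Concretely, I want a point with $p_2\le -M$ whose $N$-th iterate has $p_2\ge M$, with $M$ larger than the oscillation bound on graphs of invariant curves coming from the Lipschitz estimate. Such a transport orbit exists if and only if there is no RIC, since the RICs would be barriers and conversely Birkhoff theory applies. The existence of a finite transport segment is a $C^0$-open condition in $(k,g)$. Hence it persists for all $k$ in a neighbourhood $J$ of $k_*$ and for every $\theta$ with $\|\theta-g_0\|_\infty<\delta$. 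It follows that $J\cap A_\theta=\emptyset$, so $I_0$ and $I_{4/3}$ are different components.

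\emph{Step 3 (neighbourhood of $4/3$; main obstacle).} Here I need a \emph{robust} RIC of $\widehat f_{k,g_0}$ for $k$ near $4/3$. I would look for an invariant annulus $U$ with the following property: for all $k$ close to $4/3$, the orbits in $U$ only visit the linear pieces of $g_0$, and some iterate (or a renormalization/return map) on $U$ is affinely conjugate to an integrable twist map, or to a map with a nondegenerate elliptic-twist structure. This is what Bullett's explicit PL analysis should provide. I would then take $\theta$ to be a convolution of $g_0$ with a narrow Gaussian (followed by a trigonometric truncation), corrected to have mean zero. This $\theta$ is real-analytic, $C^0$-close to $g_0$ everywhere, and $C^r$-close on the part of $\mathbb T$ actually visited by $U$, with $r$ large (for instance $r\ge 4$), because $U$ stays away from the break points of $g_0$. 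A finite-smoothness Moser/Rüssmann twist theorem, applied in the coordinates where the PL return map is integrable, then gives RICs of $\widehat f_{k,\theta}$ for all $k$ in a fixed neighbourhood of $4/3$.

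The difficulty I expect lies in making the invariant region genuinely avoid the kinks of $g_0$ uniformly in $k$, and in checking the twist nondegeneracy there. If the annulus is forced to touch break points, I would instead modify $g_0$ in the style of Bullett, widening the linear pieces, before smoothing. Finally, I choose the smoothing parameter small enough that the conditions of Steps 2 and 3 hold simultaneously. Then the component $I_{4/3}\ni 4/3$ is a closed interval with $4/3$ in its interior, and it is separated from $I_0$ by $J$.
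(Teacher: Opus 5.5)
Your Steps 1 and 2 are fine and match the paper. Step 2 is the paper's Part~3: Bullett's gap $]0.918,1[$ combined with the $C^0$-robustness of nonexistence, Lemma~\ref{noinvcurves}, which you can invoke directly instead of an oscillation/Lipschitz argument. The genuine gap is Step 3, and its premise is false.

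Bullett's map $\widehat f_{k,g_0}$ has no RIC for any $k>4/3$, so the piecewise-linear family has no robust RIC near $4/3$ that a smoothing could inherit. The RIC at $k=4/3$ is the graph of $u_0$. It is a saddle connection of the hyperbolic fixed point $(3/4,0)$, with a corner there and rational rotation number $0$. It is structurally unstable, KAM does not apply to it, and nothing guarantees it survives a Gaussian smoothing.

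In fact, what you ask for is impossible. Suppose a fixed neighbourhood $N$ of $4/3$ were contained in $A_\theta$ for all sufficiently fine smoothings $\theta$. Pick $k_1\in N$ with $k_1>4/3$. Since $k_1\notin A_{g_0}$, Lemma~\ref{noinvcurves} gives $k_1\notin A_\theta$ once $\|\theta-g_0\|_\infty$ is small. So the right endpoint of the component of $A_\theta$ containing $4/3$ must tend to $4/3$ as $\theta\to g_0$ in $C^0$.

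The invariant annulus avoiding the kinks cannot exist either. Every RIC is a graph over all of $\mathbb T$, so it crosses the verticals $\widehat x=0$ and $\widehat x=1/2$. There $g_0$ has its corners and a smoothing is not $C^r$-close to $g_0$, so the $C^r$-smallness a finite-smoothness twist theorem needs is never available. Widening the linear pieces does not help, and would also force you to re-prove Bullett's nonexistence for the new function.

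The missing idea is to \emph{build} the RIC at $k=4/3$ into $\theta$ rather than inherit it, via Herman's inverse construction (Lemma~\ref{her}).

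\begin{itemize}
\item Approximate in $C^0$ the circle dynamics $\widetilde h_0$ on $\widehat\Gamma_{u_0}$, not $g_0$ itself, by a real-analytic diffeomorphism $\widetilde\sigma$ with Diophantine rotation number. The paper takes $\widetilde h_n=\mathrm{id}+S_np_0$ and uses the Gibbs estimate to keep $\widetilde h_n'>0$; uniform convergence alone would not give this. It then subtracts a small constant to tune the rotation number to $-\sqrt2/(2n)$.
\item Set $\theta=\frac34(\widetilde\sigma+\widetilde\sigma^{-1}-2\,\mathrm{id})$. This is analytic with zero mean, and the graph of $\mathrm{id}-\widetilde\sigma^{-1}$ is an analytic RIC of $\widehat f_{4/3,\theta}$ with Diophantine rotation number.
\item Analytic KAM in the parameter $k$, with $\theta$ fixed, then gives a neighbourhood of $4/3$ in $A_\theta$.
\item The $3$-Lipschitz bound on $\widetilde h_0^{-1}$ yields $\|\theta-g_0\|_\infty\le3\|\widetilde\sigma-\widetilde h_0\|_\infty$, which is all your Step 2 needs.
\end{itemize}

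Only $C^0$-closeness to $g_0$ is ever used. This works because existence near $4/3$ is manufactured by the construction, while only nonexistence at $0.95$ is transferred from $g_0$.
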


This paper is organized as follows. In Section~\ref{sec:preliminaries} some auxiliary results are proved,
including the robustness of nonexistence of RICs, the Fourier estimates and Herman's construction. 
We also discuss some aspects of Bullett's example in detail.
And in Section~\ref{sec:proof}, the main theorem is proved.

\section{Preliminaries}\label{sec:preliminaries}

In this section we recall some results we use, and prove some auxiliary ones.

\subsection{On the non-existence of RICs}

In this subsection, we present some results that will be used in the proof of part 3
of the main theorem. Let $k \in \mathbb{R}$ and $g$ be a continuous 1-periodic function of the real
line with zero mean.

\begin{lemma}
\label{birkhoff} If $\widehat{f}_{k,g}$ has no RIC, then for all $M>0,$
there exists $\widehat{z}^{-},\widehat{z}^{+}\in \mathbb{T}\times [0,1[$
and integers $n^{-},n^{+}>0$ such that 
\[
p_{2}\circ \widehat{f}_{k,g}^{n^{+}}(\widehat{z}^{+})-p_{2}(\widehat{z}%
^{+})>M\text{ and }p_{2}\circ \widehat{f}_{k,g}^{n^{-}}(\widehat{z}%
^{-})-p_{2}(\widehat{z}^{-})<-M.
\]
\end{lemma}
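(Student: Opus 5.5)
We argue by contradiction and treat the two inequalities symmetrically; we describe the case of $\widehat{z}^{+}$. Suppose that for some $M>0$ every point $\widehat{z}\in\mathbb{T}\times[0,1[$ satisfies $p_{2}\circ\widehat{f}_{k,g}^{n}(\widehat{z})-p_{2}(\widehat{z})\le M$ for all $n>0$. Since $\widehat{f}_{k,g}$ commutes with the vertical integer translation $T(\widehat{x},\widehat{y})=(\widehat{x},\widehat{y}+1)$, because $g$ depends only on $\widehat{x}$, the same bound holds with $M+1$ for every point of the cylinder. So the forward orbit of the circle $\widehat{\Gamma}_{0}=\mathbb{T}\times\{0\}$ stays below height $M+1$.

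We use this to build an invariant set separating the cylinder. Let
\[
U=\bigcup_{n\ge0}\widehat{f}_{k,g}^{\,n}\bigl(\mathbb{T}\times\,]-\infty,0]\bigr).
\]
Then $U$ satisfies $\widehat{f}_{k,g}(U)\subset U$, contains the lower half-cylinder, and is contained in $\mathbb{T}\times\,]-\infty,M+1]$. The inclusion $\widehat{f}_{k,g}(U)\subset U$ is clear; the bound holds because each point of $\mathbb{T}\times\,]-\infty,0]$ lies in some $T^{-j}(\mathbb{T}\times[0,1[)$, so the translated estimate applies.

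Area preservation forces $\widehat{f}_{k,g}(U)=U$ up to measure zero. To make this rigorous, we look at $U\setminus\widehat{f}_{k,g}(U)$. This set lies in the compact region $\mathbb{T}\times[-R,M+1]$ for suitable $R$, since below height $-R$ both sets contain everything: $\widehat{f}_{k,g}$ moves heights by at most $|k|\|g\|_\infty$. Comparing areas in this bounded strip then gives that the difference has zero measure. Cleaner still, we take the closure and the complement: let $W$ be the connected component, containing the lower end, of the interior of $\overline{U}$, or better, let $\widetilde U$ be the unbounded-below component of the complement of the closure of the unbounded-above component of the complement of $\overline{U}$. This "filling" of $U$ is invariant, open, connected, and contains a lower end but not an upper end.

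Its frontier is then a compact invariant set separating the two ends of the cylinder. By the standard prime-end/Birkhoff argument, the boundary of such an invariant, simply connected (in the annular sense) open set, for an area-preserving twist map, is a rotational invariant curve (Birkhoff's theorem \cite{fathi}). This contradicts the hypothesis. The $\widehat{z}^{-}$ case follows by the same argument using $\widehat{f}_{k,g}^{-1}$ and the upper half-cylinder, or by the symmetry $(\widehat{x},\widehat{y})\mapsto(-\widehat{x},-\widehat{y})$ applied with $g(-\cdot)$.

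The main obstacle is the last step: passing from a bounded invariant region to an actual invariant essential simple closed curve. The plan is to invoke Birkhoff's theory as in \cite{fathi}. For an area-preserving positive twist map of the annulus, the boundary of an invariant open annular region containing one end is the graph of a Lipschitz function. Before applying this we must check that the filling operation preserves invariance and essentiality. Invariance holds because $\widehat{f}_{k,g}$ is a homeomorphism fixing each end of the cylinder. Essentiality holds because the region contains a full lower half-cylinder and avoids the upper one.
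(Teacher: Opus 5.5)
Your route is the paper's: argue by contradiction, saturate a lower half-cylinder forward, note that the union stays below height $M+1$, replace it by a filled invariant open essential annulus, and apply Birkhoff's theorem to its frontier. Your complement-of-closure filling is an acceptable substitute for the paper's $\mathrm{Filled}(\cdot)$ / $\mathrm{Interior}(\overline{\,\cdot\,})$ construction.

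The gap is the sentence ``area preservation forces $\widehat f_{k,g}(U)=U$ up to measure zero.'' Area preservation alone does not give this. Your proof never uses that $g$ has zero mean, and that is exactly the hypothesis the paper invokes at this step. Without it the lemma is false. Take $g\equiv-c$ with $kc>0$: there is no RIC, yet every point drops by $kc$ at each iterate, so no $\widehat z^{+}$ exists. In your notation, $U=\mathbb{T}\times]-\infty,0]$, the set $U\setminus\widehat f_{k,g}(U)$ has area $kc$, and your filled set $\mathbb{T}\times]-\infty,0[$ is not invariant. Likewise, ``$\widehat f_{k,g}$ is a homeomorphism fixing each end'' makes your filling invariant only once you know $\widehat f_{k,g}(\overline U)=\overline U$. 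Forward invariance of $U$ is not enough.

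To close the gap, do the strip comparison explicitly. Let $\mu$ denote area, and let $D=\mathbb{T}\times]-\infty,-R]$ with $R\ge|k|\|g\|_\infty$, so that $D\subset\widehat f_{k,g}(U)$ and $\widehat f_{k,g}^{-1}(D)\subset U$. Then
\[
\mu\bigl(U\setminus\widehat f_{k,g}(U)\bigr)
=\mu(U\setminus D)-\mu\bigl(U\setminus\widehat f_{k,g}^{-1}(D)\bigr)
=\mu\bigl(D\setminus\widehat f_{k,g}(D)\bigr)-\mu\bigl(\widehat f_{k,g}(D)\setminus D\bigr)
=-k\int_0^1 g(\widetilde x)\,d\widetilde x .
\]
The last equality comes from writing $\widetilde f_{k,g}=\widetilde H\circ\widetilde V$ as in the introduction, projected to the cylinder. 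The shear $\widetilde H$ preserves $D$ and area, and Fubini on the vertical slices of $\widetilde V(D)=\{\widetilde y\le -R+kg(\widetilde x)\}$ gives signed area $k\int_0^1 g$ relative to $D$.

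This vanishes precisely by the zero-mean hypothesis. Each $\widehat f_{k,g}^{\,n}(\mathbb{T}\times]-\infty,0])$ is the closure of an open set, so $U$ lies in the closure of its interior. Hence the open null set $\mathrm{int}(U)\setminus\overline{\widehat f_{k,g}(U)}$ is empty, which gives $\widehat f_{k,g}(\overline U)=\overline U$. Only from there is your filling invariant and Birkhoff's theorem applicable.

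A minor point on the $\widehat z^{-}$ case: conjugating by $(\widehat x,\widehat y)\mapsto(-\widehat x,-\widehat y)$ turns $\widehat f_{k,g}$ into $\widehat f_{-k,g(-\cdot)}$, so $k$ changes sign as well.
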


\begin{proof} As the proof for both cases is analogous, let us consider only the
existence of $\widehat{z}^{+}.$ By contradiction, if for some $M>0,$ there
exists no such point, then 
\[
\widehat{f}_{k,g}^{n}(\mathbb{T}\times ]-\infty ,1[)\subset \mathbb{T}\times ]-\infty ,M+1]\text{ for all }n\geq 0.
\]
This means that the open connected set 
\[
O:=\bigcup _{n=0}^{+\infty }\widehat{f}_{k,g}^{n}(\mathbb{T}\times ]-\infty
,1[)\subset \mathbb{T}\times ]-\infty ,M+1].
\]
Consider $Filled(O),$ which is given by the union of $O$ with all
bounded connected components of its complement. As $\mathbb{T}\times
]-\infty ,1[\subset O$ and $O\cap \mathbb{T}\times ]M+1,\infty [=\emptyset ,
$ we get that $Filled(O)$ is homeomorphic to the cylinder and $\widehat{f}%
_{k,g}(Filled(O))\subset Filled(O).$ As $g$ has zero mean, although $%
\widehat{f}_{k,g}^{-1}(Filled(O))\supset Filled(O),$ they must have the same
closure, see the proof of Corollary~2 of \cite{eubrafa}. This means that 
\[
\overline{\bigcup _{n=0}^{+\infty }\widehat{f}%
_{k,g}^{-n}(Filled(O))}=\overline{Filled(O)}.
\]

So, the set 
\[
O^{*}:=Filled\left[Interior\left( \overline{\bigcup _{n=0}^{+\infty }\widehat{f}%
_{k,g}^{-n}(Filled(O))}\right)\right] 
\]
is $\widehat{f}$-invariant, open, connected, homeomorphic to the cylinder
and $$\mathbb{T}\times ]-\infty ,1[\subset O^{*}\subset \mathbb{T}\times
]-\infty ,M+2[.$$ Thus, by the Birkhoff invariant curve theorem, its boundary
must be a RIC, a contradiction with our hypothesis.
\end{proof}

Then we have the following generalization of a lemma from \cite{non1}.

\begin{lemma}
\label{lemBD01} Suppose that for points $\widetilde{z}^{-},\widetilde{z}%
^{+}\in [0,1]^{2}$ and integers $n^{-},n^{+}>0,$ the following holds:
\[
p_{2}\circ \widetilde{f}_{k,g}^{n^{+}}(\widetilde{z}^{+})-p_{2}(\widetilde{z}%
^{+})>8+4\left| k\right| M\text{ and }p_{2}\circ \widetilde{f}_{k,g}^{n^{-}}(%
\widetilde{z}^{-})-p_{2}(\widetilde{z}^{-})<-8-4\left| k\right| M,
\]
where $k$ is a fixed real number, $g:$ $\mathbb{R}\rightarrow \mathbb{R}$ is a 1-periodic continuous function 
with zero mean and $M=\left\| g\right\| _{\infty }.$   Then, there exist points $\widetilde{z},\widetilde{w}\in
[0,1]^{2}$ and integers $n_{1}>0$ and $n_{2}>0$ such that

\begin{itemize}
\item[$(1)$]  $\widetilde{f}_{k,g}^{n_{1}}(\widetilde{z})=\widetilde{z}+(s,%
\mathrm{pos})$, for some $s\in \mathbb{Z}$ and $\mathrm{pos}\geq 2,$

\item[$(2)$]  $\widetilde{f}_{k,g}^{n_{2}}(\widetilde{w})=\widetilde{w}+(l,\mathrm{neg})$, for some $l\in \mathbb{Z}$ and $\mathrm{neg}%
\leq -2$.
\end{itemize}
\end{lemma}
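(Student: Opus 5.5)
The plan is to reduce both conclusions to a fixed-point statement for a suitable plane homeomorphism, and to obtain that fixed point from Franks' lemma on periodic free disk chains. This lemma is the right tool: an orientation-preserving homeomorphism of $\mathbb{R}^2$ that has a periodic chain of pairwise disjoint free open disks has a fixed point. By symmetry (the argument for $\widetilde w$ is the same with inequalities reversed, using $\widetilde z^-$), I only describe how to get $\widetilde z$ and $n_1$.

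\emph{Step 1 (openness and bookkeeping).} By continuity of $\widetilde f_{k,g}^{n^+}$, the strict inequality $p_2\circ\widetilde f_{k,g}^{n^+}(\widetilde z)-p_2(\widetilde z)>8+4|k|M$ holds on an open neighbourhood $U^+$ of $\widetilde z^+$. Two elementary facts will control errors later.
\begin{itemize}
\item[(a)] The vertical displacement of one iterate is $kg(\widetilde x)$, so it depends only on the point of $\mathbb{T}^2$ and is bounded by $|k|M$ in absolute value.
\item[(b)] $\widetilde f_{k,g}$ commutes with the deck translations $(\widetilde x,\widetilde y)\mapsto(\widetilde x+1,\widetilde y)$ and $(\widetilde x,\widetilde y)\mapsto(\widetilde x,\widetilde y+1)$, the latter because $\widetilde x+(\widetilde y+1)=(\widetilde x+\widetilde y)+1$.
\end{itemize}
So $\widetilde f_{k,g}$ is the lift of a torus homeomorphism $f_{k,g}$, which preserves Lebesgue measure but is isotopic to a Dehn twist rather than to the identity.

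\emph{Step 2 (recurrence to build a chain).} Shrink $U^+$ to a small disk $D$ whose projection $p(D)$ is free for $f_{k,g}$, i.e.\ $f_{k,g}(p(D))\cap p(D)=\emptyset$. This is possible unless $p(\widetilde z^+)$ is a fixed point of $f_{k,g}$. But a fixed point of $f_{k,g}$ has vertical displacement in $\mathbb{Z}$, bounded by $|k|M$, so it cannot produce a net rise exceeding $8+4|k|M$ over $n^+$ steps. By Poincaré recurrence for $f_{k,g}$, almost every point of $p(D)$ returns to $p(D)$, and I take a return time $N\ge n^+$. Lifting, this gives $\widetilde f_{k,g}^N(D)\cap(D+(s,t))\neq\emptyset$ for some $(s,t)\in\mathbb{Z}^2$. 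The naive hope is that $t\ge 2$, because the orbit rose by more than $8+4|k|M$ during its first $n^+$ steps. However, the segment from time $n^+$ to the return time $N$ could descend.

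\emph{Step 3 (main obstacle).} The hard part is controlling the vertical displacement of this return segment; the margin $8+4|k|M$ together with the hypothesis on $\widetilde z^-$ is what should be used here. My first attempt is a dichotomy.
\begin{itemize}
\item[(i)] If some return of a point of $D$ has total integer vertical shift $t\ge2$, I keep it.
\item[(ii)] Otherwise all returns from $D$ have $t\le1$. Then the rise over the first $n^+$ steps forces the orbit to come back down by more than $6+4|k|M$ afterwards. Using the bound $|k|M$ per step to absorb the rounding errors at the endpoints, I will split the orbit at a time where it is near its maximum height and concatenate with the rising piece starting from $D$.
\end{itemize}
Doing this with translates of $D$ by multiples of $(0,1)$ yields a chain of disks $D_0,\dots,D_r$, all translates of $D$. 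Each $D_i$ hits $D_{i+1}$ under some iterate of $\widetilde f_{k,g}$, and the chain closes up with a total translation $(s,\mathrm{pos})$ with $\mathrm{pos}\ge2$.

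\emph{Step 4 (apply Franks).} Consider the homeomorphism $h=\widetilde f_{k,g}^{n_1}-(s,\mathrm{pos})$ of $\mathbb{R}^2$; it is orientation preserving, being built from an area-preserving homeomorphism. Converting the chain into a periodic free disk chain for one homeomorphism needs care, and here I will follow the argument of \cite{non1}. The idea is to use the shift $T(\widetilde z)=\widetilde z-(0,1)$ and the fact that $\widetilde f_{k,g}$ commutes with the integer translations, which reduces the chain to a periodic disk chain of a single map of the form $T^{j}\circ\widetilde f_{k,g}$ (composed with horizontal integer translations). Freeness of the disks is inherited from freeness of $p(D)$. Franks' lemma then gives a fixed point $\widetilde z$ of $h$, which is exactly conclusion (1); translating by an element of $\mathbb{Z}^2$ puts $\widetilde z$ in $[0,1]^2$. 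Conclusion (2) follows identically, starting from $\widetilde z^-$.
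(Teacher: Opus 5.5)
\textbf{There is a genuine gap.} It begins with what you are trying to prove.

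\textbf{The target is misread.} In the lemma only $s$ is required to be an integer; $\mathrm{pos}$ is a real number $\ge 2$. So conclusion (1) only asks for a point of $[0,1]^2$ that, after $n_1$ iterates, lies on its own vertical line modulo $1$ and has risen by at least $2$. You instead aim for a fixed point of $\widetilde f_{k,g}^{\,n_1}-(s,\mathrm{pos})$ with an integer translation, i.e.\ a periodic orbit of $f_{k,g}$ with positive vertical rotation number. That is a far stronger statement, and your sketch does not establish it.

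\textbf{Step~3 does not close the chain.} You correctly flag Step~3 as the crux, but it never produces a closed chain with positive net vertical translation. Poincar\'e recurrence returns $D$ to some $D+(s,t)$ with no control on $t$. The instruction to ``split the orbit near its maximum height and concatenate with the rising piece'' does not explain why the pieces close up into a cycle. In case~(i), the relation $\widetilde f_{k,g}^{\,N}(D)\cap(D+(s,t))\neq\emptyset$ says exactly that $D$ is \emph{not} free for $\widetilde f_{k,g}^{\,N}-(s,t)$. More generally, freeness for such a map needs $f_{k,g}^{N}(p(D))\cap p(D)=\emptyset$, not just freeness of $p(D)$ under $f_{k,g}$.

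\textbf{Step~1(b) is false.} In fact $\widetilde f_{k,g}(\widetilde x,\widetilde y+1)=\widetilde f_{k,g}(\widetilde x,\widetilde y)+(1,1)$, hence $\widetilde f_{k,g}^{\,n}(\widetilde z+(0,1))=\widetilde f_{k,g}^{\,n}(\widetilde z)+(n,1)$. The plane lift does not commute with the vertical deck translation. The reduction in Step~4 to a disk chain for a single map therefore fails as written, because the horizontal shifts depend on the lengths of the orbit pieces. Note also that the hypothesis on $\widetilde z^-$ plays no role in (1).

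\textbf{The missing idea, and the paper's route.} The identity $\widetilde f_{k,g}^{\,n}(\widetilde z+(0,1))=\widetilde f_{k,g}^{\,n}(\widetilde z)+(n,1)$ is exactly what you need. With the correct reading of the conclusion, it gives a short elementary proof with no recurrence or fixed-point theory, which is what the paper does.
\begin{itemize}
\item Let $r=\{p_1(\widetilde z^+)\}\times[0,1]$. The endpoints of $\widetilde f_{k,g}^{\,n^+}(r)$ differ by $(n^+,1)$, so this curve meets the verticals $p_1(\widetilde z^+)+\mathbb Z$.
\item Either some point of $[0,1]^2$ returns to its own vertical at time $n^+$ with gain at least $2$, and you are done with $n_1=n^+$. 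Or every point $(p_1(\widetilde z^+),t)$ of $r$ landing on those verticals has image height $<2+t\le3$.
\item In the latter case, move along $r$ from $\widetilde z^+$, whose image height exceeds $8+4|k|M$. You meet a first point $\widetilde z^*$ with image height $4+2|k|M$. The image of the subsegment $r'$ between them avoids those verticals, so it has horizontal extent $<1$ and vertical extent $>4+2|k|M$.
\item One more iterate adds $\widetilde y+kg(\widetilde x)$ to the horizontal coordinate. This stretches the curve to horizontal extent $>(4+2|k|M)-1-2|k|M=3$, while keeping it above height $4+|k|M\ge4$.
\item Hence some point of $r'\subset[0,1]^2$ lands on its own vertical at time $n^++1$, having risen by at least $3$.
\end{itemize}
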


\begin{proof} The proof of the existence of $\widetilde{w}$ is analogous to the one
for $\widetilde{z},$ so we omit it. 

Initially, assume that for all $\widetilde{z}\in [0,1]^{2}$ such that 
$\widetilde{f}_{k,g}^{n^{+}}(\widetilde{z})=\widetilde{z}+(s,m)$ for some
integer $s,$ we have $m<2.$

A very natural thing is to consider the linear segment $r=\{p_{1}(\widetilde{%
z}^{+})\}\times [0,1].$ First, let us define 
\begin{equation}
Max.H.L(\widetilde{f}_{k,g}^{n}(r))={\sup_{t,t^{\prime }\in [0,1]}}\left|
p_{1}\circ \widetilde{f}_{k,g}^{n}(p_{1}(\widetilde{z}^{+}),t)-p_{1}\circ \widetilde{f}_{k,g}^{n}(p_{1}(\widetilde{z}^{+}),t^{\prime
})\right| .  \label{chma}
\end{equation}

It is clear that 
\begin{equation}
Max.H.L(\widetilde{f}_{k,g}^{n}(r))\geq n.  \label{chmaxde}
\end{equation}

So for all $n\geq 1,$ there exists at least one integer $s,$ such that $%
p_{1}(\widetilde{z}^{+})+s\in p_{1}\left( \widetilde{f}_{k,g}^{n}(r)\right) .
$

The hypothesis we assumed in the beginning implies that for all $0\leq t\leq 1$
such that

\begin{equation}
p_{1}\circ \widetilde{f}_{k,g}^{n^{+}}(p_{1}(\widetilde{z}^{+}),t)=p_{1}(%
\widetilde{z}^{+})\text{ }(\text{mod 1}),  \label{hipcont1}
\end{equation}
we have:

\begin{equation}
p_{2}\circ \widetilde{f}^{n^{+}}(p_{1}(\widetilde{z}^{+}),t)<2+t\leq 3
\label{hipcont2}
\end{equation}

As $p_{2}\circ \widetilde{f}^{n^{+}}(\widetilde{z}^{+})-p_{2}(\widetilde{z}%
^{+})>8+4\left| k\right| M,$ there exists $\widetilde{z}^{*}\in r$ such that:

\[
\begin{array}{c}
p_{2}\circ \widetilde{f}^{n^{+}}(\widetilde{z}^{*})=4+2\left| k\right| M \\ 
\text{and} \\ 
\begin{array}{l}
\forall \text{ }\widetilde{m}\in r^{\prime }:=\{\text{linear segment of
extremes }\widetilde{z}^{*}\text{ and }\widetilde{z}^{+}\}, \\ 
p_{2}\circ \widetilde{f}_{k,g}^{n^{+}}(\widetilde{m})\geq 4+2\left| k\right|
M
\end{array}
\end{array}
\]
The reason why such a point $\widetilde{z}^{*}$ exists is the following: For
at least one integer $s,$ $p_{1}(\widetilde{z}^{+})+s\in p_{1}\left( 
\widetilde{f}_{k,g}^{n^{+}}(r)\right) .$ Thus, from (\ref{hipcont1}) and (%
\ref{hipcont2}), $\widetilde{f}_{k,g}^{n^{+}}(r)$ must cross the line $l$
given by: $l:=\mathbb{R}\times \{4+2\left| k\right| M\}$

Also from (\ref{hipcont1}) and (\ref{hipcont2}) we have: 
\[
{\sup_{\widetilde{m},\widetilde{k}\in {r}^{\prime }}}\left| p_{1}\circ 
\widetilde{f}_{k,g}^{n^{+}}(\widetilde{m})-p_{1}\circ \widetilde{f}%
_{k,g}^{n^{+}}(\widetilde{k})\right| <1
\]

It is clear that: 
\[
p_{2}\circ \widetilde{f}_{k,g}^{n^{+}}(\widetilde{z}^{+})-p_{2}\circ 
\widetilde{f}_{k,g}^{n^{+}}(\widetilde{z}^{*})>4+2\left| k\right| M
\]

Let us consider $\widetilde{f}_{k,g}(\widetilde{f}_{k,g}^{n^{+}}(r^{\prime})):$%
\[
{\sup_{\widetilde{m},\widetilde{k}\in {r}^{\prime }}}\left| p_{1}\circ 
\widetilde{f}_{k,g}^{1+n^{+}}(\widetilde{m})-p_{1}\circ \widetilde{f}%
_{k,g}^{1+n^{+}}(\widetilde{k})\right| \geq 4+2\left| k\right| M-1-2\left|
k\right| M>2
\]

And finally, from the choice of $r^{\prime },$ the following holds 

\[
\inf_{\widetilde{m}\in r^{\prime }}p_{2}\left(
\widetilde{f}_{k,g}^{1+n^{+}}(\widetilde{m})\right)
\geq 4+2|k|M-|k|M\geq4.
\]
So, either the lemma is proved for $n_{1}=1+n^{+},$ or our initial assumption
that for all $\widetilde{z}\in [0,1]^{2}$ such that $\widetilde{f}%
_{k,g}^{n^{+}}(\widetilde{z})=\widetilde{z}+(s,m)$ for some integer $s,$
implies $m<2$ does not hold, and in this case the lemma is proved for $n_{1}=n^{+}.$
\end{proof}

\begin{lemma}
\label{noinvcurves} Assume $g,g^{*}:$ $\mathbb{R}\rightarrow \mathbb{R}$
are 1-periodic continuous functions with zero mean. If $\widehat{f}_{k,g}$
does not have RICs, then there exists $\varepsilon >0$ such that for 
\[
\left| k-k^{*}\right| +\left\| g-g^{*}\right\| _{\infty }<\varepsilon ,
\]
the map $\widehat{f}_{k^{*},g^{*}}$ also lacks RICs.
\end{lemma}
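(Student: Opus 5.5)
We combine Lemma~\ref{birkhoff} and Lemma~\ref{lemBD01} to get a finite, robust certificate of non-existence, and then show that this certificate is incompatible with any RIC.

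\textbf{Step 1: a finite certificate.} Suppose $\widehat{f}_{k,g}$ has no RIC. Set $M=\|g\|_\infty$ and apply Lemma~\ref{birkhoff} with the constant $8+4(|k|+1)(M+1)$. Lifting to the plane and translating by integer vectors (this commutes with $\widetilde f_{k,g}$), we get points $\widetilde z^\pm\in[0,1]^2$ and times $n^\pm$ satisfying the hypotheses of Lemma~\ref{lemBD01}. That lemma yields $\widetilde z,\widetilde w\in[0,1]^2$ and $n_1,n_2>0$ with
\[
\widetilde f_{k,g}^{n_1}(\widetilde z)=\widetilde z+(s,\mathrm{pos}),\quad \mathrm{pos}\ge2,
\qquad
\widetilde f_{k,g}^{n_2}(\widetilde w)=\widetilde w+(l,\mathrm{neg}),\quad \mathrm{neg}\le-2.
\]
Only finitely many iterates occur. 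Moreover $(k,g)\mapsto \widetilde f_{k,g}^{n}$ is continuous in the $C^0$ topology, uniformly on compact sets, for each fixed $n$. Hence there is $\varepsilon>0$ such that $|k-k^*|+\|g-g^*\|_\infty<\varepsilon$ implies
\[
p_2\circ\widetilde f_{k^*,g^*}^{n_1}(\widetilde z)-p_2(\widetilde z)>1
\quad\text{and}\quad
p_2\circ\widetilde f_{k^*,g^*}^{n_2}(\widetilde w)-p_2(\widetilde w)<-1 .
\]
The horizontal displacements may stop being integers, which is why we keep only the vertical information.

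\textbf{Step 2: a RIC forbids such displacements.} Suppose $\widehat f_{k^*,g^*}$ had a RIC. It is the graph $\widehat\Gamma_m$ of a continuous $m$. Its integer vertical translates $\widehat\Gamma_{m+j}$ are also invariant, because the family commutes with $(\widehat x,\widehat y)\mapsto(\widehat x,\widehat y+1)$. The closed annuli between consecutive translates are therefore invariant. Each such annulus has vertical extent at most $\max m-\min m+1$, which is not bounded a priori, so it is not enough to say that orbits stay in one annulus.

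The fix is to use both the up and down certificates at the same point of the fundamental domain. The point $\widetilde z$ lies in some strip $\{m+j\le \widetilde y\le m+j+1\}$ over its $\widetilde x$. Its image under $\widetilde f^{n_1}$ is a horizontal integer translate (up to the perturbation) raised by more than $1$, so it lies strictly above $\widetilde\Gamma_{m+j+1}$ at that abscissa. This happens unless the graph oscillates, and that oscillation is the obstacle.

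To get a clean contradiction, I would instead use the lemma's own structure: upgrade Step 1 to \emph{segments}. As in the proof of Lemma~\ref{lemBD01}, take a vertical segment $r$ of height $1$ in $[0,1]^2$ whose $n_1$-image contains a piece lying entirely in $\{\widetilde y\ge 4\}$ and of horizontal length greater than $2$. Any vertical segment of height $1$ meets some translate $\widetilde\Gamma_{m+j}$ with $j$ in a bounded range. The image piece, of horizontal length greater than $2$, must lie between graphs. Comparing with the translate $\widetilde\Gamma_{m+j}$ that $r$ crosses, invariance forces the image of $r$ to meet $\widetilde\Gamma_{m+j}$, which lies below height $\max m$. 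The same argument downward forces it above $\min m$. With both certificates robust under perturbation, the heights $\ge 4$ and $\le -4$ will contradict a single graph of oscillation less than $1$ relative to the level sets. More simply, choose the constant $M$ in Lemma~\ref{birkhoff} larger. A RIC through the torus graph has $\max m-\min m\le$ some bound $B$ depending only on $\|g^*\|_\infty$ and $k^*$ (Lipschitz constant of Birkhoff graphs for standard maps). Then displacement greater than $B+2$, which is robust by Step 1, is impossible, since orbits stay between $\widetilde\Gamma_{m+j}$ and $\widetilde\Gamma_{m+j+1}$.

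\textbf{Main obstacle.} The key uniform ingredient is an a priori bound on the oscillation of RICs, uniform for $(k^*,g^*)$ near $(k,g)$. I would obtain it from the twist/Birkhoff argument: $m(x+\text{rot})$-type inequalities give $|m(x)-m(x')|\le$ const$\cdot(|k|\|g\|_\infty+1)$. Then take $M$ in Step 1 exceeding this bound by $2$. This gives the contradiction, so $\widehat f_{k^*,g^*}$ has no RIC.
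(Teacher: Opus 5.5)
Your final argument (the ``more simply'' route) is correct and differs from the paper's. As written, though, it sits under material you should discard, and its key ingredient needs a real justification.

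\textbf{How the paper avoids your obstacle.} The paper never perturbs an exact return. It applies Lemma~\ref{birkhoff} with the constant $8+4(|k|+1)(\|g\|_\infty+1)$, which dominates $8+4|k^*|\|g^*\|_\infty$ for every perturbation with $\varepsilon<1$. It transfers only this strict large-displacement inequality to $\widetilde f_{k^*,g^*}$ by continuity. It then runs the positive half of the proof of Lemma~\ref{lemBD01} for the \emph{perturbed} map. That yields $\widetilde f^{n_1}_{k^*,g^*}(\widetilde z)=\widetilde z+(s,\mathrm{pos})$ with $\mathrm{pos}\ge2$, an exact return to the same vertical. There, confinement to the strip between $\widetilde\Gamma_{m+j}$ and $\widetilde\Gamma_{m+j+1}$ forces $|\mathrm{pos}|\le1$, whatever the shape of $m$. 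So the problem you hit in Step~1 (horizontal displacements ceasing to be integers) comes from applying Lemma~\ref{lemBD01} before perturbing rather than after.

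\textbf{What your route does instead.} Your final route bypasses Lemma~\ref{lemBD01} entirely. It uses Lemma~\ref{birkhoff} with a large constant, continuity of finitely many iterates, strip confinement, and a uniform bound on $\max m-\min m$. This is a legitimate and shorter proof. The price is an a priori oscillation bound on RICs, which the paper's route does not need.

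\textbf{The oscillation bound needs a different justification.} Your parenthetical appeal to the Lipschitz constant of Birkhoff graphs does not work. That constant depends on the Lipschitz constant of $g^*$, not only on $\|g^*\|_\infty$, and in this lemma $g^*$ is merely continuous. Herman-type examples with $\widetilde h$ close to the identity in $C^0$ but with wild derivative show that no Lipschitz bound in terms of $\|g^*\|_\infty$ exists.

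The correct argument is short. If $\widetilde\Gamma_m$ is invariant under $\widetilde f_{k^*,g^*}$, the induced map $\widetilde h(\widetilde x)=\widetilde x+m(\widetilde x)+k^*g^*(\widetilde x)$ is an increasing homeomorphism with $\widetilde h(\widetilde x+1)=\widetilde h(\widetilde x)+1$. Hence $\widetilde h-\operatorname{id}$ has oscillation at most $1$, and therefore $\max m-\min m\le 1+2|k^*|\|g^*\|_\infty\le 1+2(|k|+1)(\|g\|_\infty+1)$ when $\varepsilon<1$.

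Orbits in a strip then satisfy $|p_2\circ\widetilde f^{\,n}_{k^*,g^*}(\widetilde z)-p_2(\widetilde z)|\le 2+2(|k|+1)(\|g\|_\infty+1)$. Taking the constant in Lemma~\ref{birkhoff} larger than this finishes the proof; the paper's constant already suffices.

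\textbf{What to delete.} The detour through Lemma~\ref{lemBD01} in Step~1 serves no purpose in your final route. The ``segments'' paragraph should also go; its last sentence, about heights $\ge4$ and $\le-4$ contradicting a graph of oscillation less than $1$, does not parse into a valid argument.
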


\begin{proof}
As, $\widehat{f}_{k,g}$ does not have RICs, for $M=8+4(\left| k\right|
+1).(\left\| g\right\| _{\infty }+1),$ from Lemma \ref{birkhoff}, there
exists $\widetilde{z}^{+}\in [0,1]^{2}$ and an integer $n^{+}>0$ such that $%
p_{2}\circ \widetilde{f}_{k,g}^{n^{+}}(\widetilde{z}^{+})-p_{2}(\widetilde{z}%
^{+})>M.$ By continuity, if $0<\varepsilon <1$ is sufficiently small, then
the same holds for $\widetilde{f}_{k^{*},g^{*}}.$ 

By contradiction, assume that 
$\widehat{f}_{k^{*},g^{*}}$ has an invariant curve $\gamma$. So, $\gamma+(0,i)$ is also invariant 
for all integers $i$. This means that if the positive orbit of a point returns to the vertical containing that 
point, the total vertical displacement must stay between -1 and 1. So, from the %using the
%positive-displacement argument in the proof of 
positive-displacement argument in the proof of Lemma~\ref{lemBD01} we
get a contradiction, %from the proof of Theorem 3 of \cite{non1} that 
and thus $\widehat{f}_{k^{*},g^{*}}$ does not
have RICs.  
\end{proof}

\subsection{ Bullett's example}

Consider the $1$-periodic, zero mean, continuous function $g_{0}:$ $\mathbb %
R\rightarrow \mathbb R,$ given by: 
\begin{equation}
\text{for }0\leq \widetilde{x}\leq 1,\text{ }g_{0}(\widetilde{x})=\left| 
\widetilde{x}-\frac{1}{2}\right| -\frac{1}{4}=\begin{cases}
\frac14-\widetilde{x},&0\le \widetilde{x}\le\frac12,\\[1mm]
\widetilde{x}-\frac34,&\frac12\le \widetilde{x}\le1, \end{cases} .
\label{g0}
\end{equation}
In \cite{Bullett}, S. Bullett proved that $\widehat{f}_{k,g_{0}}$ has no
RIC for $0.918<k<1$ and also for $k>4/3.$ Exactly at 
$4/3,$ there exists a RIC.

In the following, we analyze the situation for $k=4/3$ in detail.% the map $\widehat{f}_{4/3,g_{0}}$ and its
%invariant curve.

Define the $1$-periodic function 
\begin{equation}
u_{0}(\widetilde{x})=\begin{cases} \frac23\widetilde{x}-\frac12,&0\le
\widetilde{x}\le\frac34,\\[1mm] \frac32-2\widetilde{x},&\frac34\le
\widetilde{x}\le1. \end{cases}  \label{eq:u0}
\end{equation}
Its endpoint values agree, so its periodic extension is continuous. The
circle map induced on $\widehat{\Gamma }_{u_{0}}=\pi (graph(u_{0}))$ is
denoted by $h_{0}$. Consider its real lift given by 
\begin{equation}
\begin{aligned}
\widetilde h_0(\widetilde x)
&:=\widetilde x+u_0(\widetilde x)+\frac43g_0(\widetilde x)
=\begin{cases}
\widetilde x/3-1/6,&0\le\widetilde x\le1/2,\\[1mm]
3\widetilde x-3/2,&1/2\le\widetilde x\le3/4,\\[1mm]
\widetilde x/3+1/2,&3/4\le\widetilde x\le1,
\end{cases} \text{ and }\widetilde h_0(\widetilde x+1)=\widetilde h_0(\widetilde x)+1.%\\[1mm]\\[1mm]
%\widetilde h_0(\widetilde x+1)&=\widetilde h_0(\widetilde x)+1.
\end{aligned}
\label{eq:h0}
\end{equation}
This is a strictly increasing homeomorphism of $\mathbb R$. Inverting the
branches, with their degree-one extensions, gives 
\begin{equation}
\begin{aligned}
\widetilde h_0^{-1}(\widetilde x)
&=\begin{cases}
\widetilde x/3+1/2,&0\le\widetilde x\le3/4,\\[1mm]
3\widetilde x-3/2,&3/4\le\widetilde x\le1, 
\end{cases} \text{ and }\widetilde h_0^{-1}(\widetilde x+1)=\widetilde h_0^{-1}(\widetilde x)+1.%\\[1mm]
%\widetilde h_0^{-1}(\widetilde x+1)&=\widetilde h_0^{-1}(\widetilde x)+1.
\end{aligned}
\label{eq:h0inverse}
\end{equation}
In particular, 
\begin{equation}
\widetilde{h}_{0}^{-1}(\widetilde{x})=\widetilde{x}-u_{0}(\widetilde{x}%
),\qquad \widetilde{h}_{0}(\widetilde{x})+\widetilde{h}_{0}^{-1}(\widetilde{x%
})-2\widetilde{x}=\frac43g_{0}(\widetilde{x}).  \label{eq:seed-identities}
\end{equation}
These identities immediately verify invariance: 
\[
u_{0}(\widetilde{h}_{0}(\widetilde{x}))=\widetilde{h}_{0}(\widetilde{x})-%
\widetilde{x}=u_{0}(\widetilde{x})+\frac43g_{0}(\widetilde{x}),
\]
and consequently 
\begin{equation}
\widetilde{f}_{4/3,g_{0}}(\widetilde{x},u_{0}(\widetilde{x}))=(\widetilde{h}%
_{0}(\widetilde{x}),u_{0}(\widetilde{h}_{0}(\widetilde{x}))).
\label{eq:seed-invariance}
\end{equation}
Projecting by $\pi $ proves that $\widehat{\Gamma }_{u_{0}}$ is invariant
under $\widehat{f}_{4/3,g_{0}}$.

The displacement $p_{0}=\widetilde{h}_{0}-\operatorname{id}$ is particularly
useful: 
\begin{equation}
p_{0}(\widetilde{x})=\begin{cases} -\frac23\widetilde{x}-\frac16,&0\le
\widetilde{x}\le\frac12,\\[1mm] 2\widetilde{x}-\frac32,&\frac12\le
\widetilde{x}\le\frac34,\\[1mm] -\frac23\widetilde{x}+\frac12,&\frac34\le
\widetilde{x}\le1. \end{cases}  \label{eq:p0}
\end{equation}
Thus 

\begin{equation}
p_{0}(\widetilde{x})\le 0,\qquad \max_{\widetilde{x}\in \mathbb{R}}p_{0}(\widetilde{x}%
)=0,\qquad p_{0}(\widetilde{x})=0\ \Longleftrightarrow \ \widetilde{x}\in 
\frac{3}{4}+\mathbb Z.  \label{eq:maximum}
\end{equation}
The lift $\widetilde{h}_{0}$ has rotation number $0$. Its unique fixed point
modulo $1$ repels from the left and attracts from the right. The RIC 
$\widehat{\Gamma }_{u_{0}}$ comes from a saddle connection between one
stable and one unstable branch of the hyperbolic $\widehat{f}_{4/3,g_{0}}$%
-fixed point $\widehat{P}=(3/4,0).$ A simple computation shows that its
eigenvalues are $3$ and $1/3.$ In this way, $\widehat{\Gamma }_{u_{0}}$ has
a corner at $\widehat{P}.$

Both $\widetilde{h}_0$ and $\widetilde{h}_0^{-1}$ are $3$-Lipschitz. In
particular, 
\begin{equation}  \label{eq:inverse-Lip}
|\widetilde{h}_0^{-1}(s)-\widetilde{h}_0^{-1}(t)|\le3|s-t|.
\end{equation}

\subsection{Ordinary Fourier truncation and the Gibbs estimate}

For a 1-periodic integrable scalar function $v$, use the convention 
\[
c_{j}(v)=\int_{0}^{1}v(\widetilde{x})e^{-2\pi ij\widetilde{x}}\,d\widetilde{x%
},\qquad S_{n}v(\widetilde{x})=\sum_{j=-n}^{n}c_{j}(v)e^{2\pi ij\widetilde{x}%
}.
\]
Define 
\begin{equation}
p_{n}=S_{n}p_{0},\qquad \widetilde{h}_{n}(\widetilde{x})=\widetilde{x}+p_{n}(%
\widetilde{x}),  \label{eq:hn}
\end{equation}
where $p_{0}$ comes from expression (\ref{eq:p0}). Here $p_{n}$ is a real
trigonometric polynomial. The inequality to establish is $p_{n}^{\prime }>-1$%
, equivalently $\widetilde{h}_{n}^{\prime }>0$.

\subsubsection{An explicit Fourier formula and a uniform error bound}

From \eqref{eq:p0}, 
\[
p_{0}^{\prime }(\widetilde{x})=-\frac{2}{3}+\frac{8}{3}\,\chi _{(1/2,\,3/4)}(%
\widetilde{x})\text{ for almost every }\widetilde{x},\text{ and }%
\int_{0}^{1}p_{0}(\widetilde{x})\,d\widetilde{x}=-\frac{1}{4}. 
\]
For $j\ne 0$, integration by parts therefore gives 
\[
c_{j}(p_{0})=-\frac{2}{3\pi ^{2}j^{2}}\left( e^{-2\pi ij(1/2)}-e^{-2\pi
ij(3/4)}\right) . 
\]
Equivalently, 
\begin{equation}
p_{n}(\widetilde{x})=-\frac{1}{4}-\frac{4}{3\pi ^{2}}\sum_{j=1}^{n}\frac{%
\cos (2\pi j(\widetilde{x}-\frac{1}{2}))-\cos (2\pi j(\widetilde{x}-\frac{3}{%
4}))}{j^{2}}.  \label{eq:fourier-explicit}
\end{equation}
The infinite series is absolutely and uniformly convergent to $p_{0}$. In
particular, for $n\ge 1$, 
\begin{equation}
\begin{aligned}
e_n&:=\|\widetilde h_n-\widetilde h_0\|_\infty
=\|p_n-p_0\|_\infty\\
&\le\frac{8}{3\pi^2}\sum_{j>n}\frac1{j^2}
\le\frac{8}{3\pi^2n}.
\end{aligned}
\label{eq:en}
\end{equation}

\subsubsection{Positive derivatives for all sufficiently large orders}

We use the classical Gibbs estimate for the characteristic function $\chi
_{I}(x)$ of a fixed nondegenerate proper interval $I\subset \mathbb T$: 
\begin{equation}
\min_{\widetilde{x}}S_{n}\chi _{I}(\widetilde{x})=-G+o(1),\text{ where }G=\frac{1}{\pi }%
\int_{0}^{\pi }\frac{\sin t}{t}\,dt-\frac{1}{2}\cong 0.089489872\ldots .
\label{eq:gibbs}
\end{equation}
The convention here measures the undershoot as a fraction of the jump, which
is $1$. The sine-integral description of this phenomenon is recalled in 
\cite[\S6.16(i)]{DLMF}.

The local profile at a rising endpoint is $1/2+\Si(t)/\pi$, where
$\Si(t)=\int_0^t(\sin s)/s\,ds$. It has minimum $-G$; the falling endpoint gives its reflection.
Uniform estimates for this profile near each endpoint, together with uniform
convergence away from the endpoints, give \eqref{eq:gibbs}.

\begin{proposition}
\label{prop:positive} For all sufficiently large $n$, $\widetilde{h}_{n}$ is
the lift of a real-analytic orientation-preserving circle diffeomorphism.
More precisely, 
\[
\lim_{n\to \infty }\min_{\widetilde{x}\in \mathbb{R}}\widetilde{h}_{n}^{\prime }(%
\widetilde{x})=\frac{1}{3}-\frac{8}{3}G=0.094693674\ldots>\frac1{135}>0.
\]
\end{proposition}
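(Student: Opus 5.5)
Since $\widetilde h_n'=1+p_n'$ and Fourier partial sums commute with differentiation, we have $p_n'=S_n(p_0')$. Here $p_0'$ is the $L^2$ function $-\tfrac23+\tfrac83\chi_{(1/2,3/4)}$ (the derivative of the absolutely continuous $p_0$), and its Fourier coefficients are $2\pi i j\,c_j(p_0)$. The constant term of $p_0'$ has mean zero, so it contributes nothing to any partial sum with $n\ge 0$ except itself. Hence
\[
\widetilde h_n'(\widetilde x)=1-\frac23+\frac83\,S_n\chi_{(1/2,3/4)}(\widetilde x)=\frac13+\frac83\,S_n\chi_{(1/2,3/4)}(\widetilde x).
\]
Taking the minimum over $\widetilde x$ and applying the Gibbs estimate \eqref{eq:gibbs} with $I=(1/2,3/4)$ gives
\[
\min_{\widetilde x}\widetilde h_n'=\frac13+\frac83\bigl(-G+o(1)\bigr)\longrightarrow \frac13-\frac83G,
\]
using that the coefficient $8/3$ is positive. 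The numerical value then follows by substituting $G\approx0.089489872$: $\tfrac83G\approx0.238639659$, so the limit is $\approx0.094693674>1/135\approx0.0074$.

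The consequence is immediate. For $n$ large the minimum exceeds, say, $1/270>0$, so $\widetilde h_n$ is strictly increasing with positive derivative. It is a trigonometric polynomial plus the identity, so it is entire and commutes with $\widetilde x\mapsto\widetilde x+1$. It is therefore the lift of a real-analytic orientation-preserving circle diffeomorphism, with analytic inverse by the inverse function theorem.

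The only delicate point is justifying \eqref{eq:gibbs} for an interval with two endpoints. One needs the minimum of $S_n\chi_I$ to be governed by the local profiles $1/2\pm\Si(\cdot)/\pi$ at each endpoint, with no interaction between the two jumps. I would write $S_n\chi_I=D_n*\chi_I$ as a difference of integrals of the Dirichlet kernel. Near each endpoint, within distance $o(1)$, the other jump contributes only a uniformly convergent term, since the conjugate Dirichlet integrals converge uniformly away from their singularity. Away from both endpoints, $S_n\chi_I\to\chi_I\ge0$ uniformly. The undershoot $-G$ occurs just outside each endpoint (just left of the rising endpoint $1/2$ and, by reflection, just right of the falling endpoint $3/4$). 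Since the paper states \eqref{eq:gibbs} as classical, I would simply invoke it, noting that the jump has height $1$ and that $I$ is a nondegenerate proper interval.
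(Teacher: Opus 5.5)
Your proposal is correct and follows essentially the same route as the paper: the identity $\widetilde h_n'=\frac13+\frac83S_n\chi_{(1/2,3/4)}$, the Gibbs estimate, and then positivity plus $\widetilde h_n(\widetilde x+1)=\widetilde h_n(\widetilde x)+1$ and the analytic inverse function theorem. The one difference is the numerical step: you verify $\frac13-\frac83G>\frac1{135}$ by substituting the decimal value of $G$, whereas the paper certifies it without uncertified decimals through the alternating-series bound $G<B_2=\frac12-\frac{\pi^2}{18}+\frac{\pi^4}{600}<\frac{11}{90}$ (using $\frac{49}{5}<\pi^2<10$), and this bound is exactly where the constant $\frac1{135}=\frac13-\frac83\cdot\frac{11}{90}$ comes from.
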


\begin{proof}
Since $p_{0}$ is absolutely continuous, differentiation of its finite
Fourier sums gives 
\[
\widetilde{h}_{n}^{\prime }=1+(S_{n}p_{0})^{\prime }=1+S_{n}(p_{0}^{\prime
})=\frac{1}{3}+\frac{8}{3}S_{n}\chi _{(1/2,\,3/4)}.
\]
By \eqref{eq:gibbs}, therefore,
\[
\min_{\widetilde{x}\in \mathbb{R}}\widetilde h_n^{\prime}(\widetilde x)
=\frac13-\frac83G+o(1)\qquad(n\to\infty).
\]
Let us find a short rational upper bound for $G:$ Integrating the Taylor
series of $\frac{\sin t}{t}$ term by term on $[0,\pi ]$, which is justified
by uniform convergence, gives 
\[
G=-\frac{1}{2}+\sum_{m=0}^{\infty }(-1)^{m}a_{m},\qquad a_{m}=\frac{\pi ^{2m}%
}{(2m+1)(2m+1)!}.
\]
%See also \cite[\S6.6, equation~(6.6.5)]{DLMF} for the sine-integral series.
The terms are strictly decreasing, since 
\[
\frac{a_{m+1}}{a_{m}}=\frac{\pi ^{2}(2m+1)}{(2m+3)^{2}(2m+2)}\le \frac{\pi
^{2}}{18}<1,\qquad m\ge 0,
\]
and they tend to zero. To verify the displayed inequality, note that 
\[
(2m+3)^{2}(2m+2)-18(2m+1)=8m^{3}+32m^{2}+6m\ge 0\text{ for all integers }m>0.
\]
Consequently, writing 
\[
B_{N}=-\frac{1}{2}+\sum_{m=0}^{N}(-1)^{m}a_{m},
\]
the alternating-series remainder estimate yields 
\begin{equation}
B_{2r+1}<G<B_{2r},\qquad |G-B_{N}|<a_{N+1}.
\end{equation}

For the derivative estimate, it suffices to consider the first three terms of the series, and
use that $49/5<\pi ^{2}<10$: 
\[
G<B_{2}=\frac{1}{2}-\frac{\pi ^{2}}{18}+\frac{\pi ^{4}}{600}<\frac{11}{90}. 
\]
Thus positivity in Proposition~\ref{prop:positive} does not depend on any
uncertified decimal computation. In fact, its limiting minimum exceeds $%
1/135:$%
\[
\frac{1}{3}-\frac{8}{3}G>\frac{1}{3}-\frac{8}{3}\frac{11}{90}=\frac{1}{135}%
>0. 
\]
Thus $\widetilde{h}_{n}^{\prime }$ is bounded below by a positive constant
for all sufficiently large $n$. Since $\widetilde{h}_{n}(\widetilde{x}+1)=%
\widetilde{h}_{n}(\widetilde{x})+1$, it is an increasing bijection of $%
\mathbb R$. The analytic inverse function theorem gives a real-analytic
inverse. \end{proof}

The role of the Gibbs estimate is quantitative: uniform convergence of $%
\widetilde{h}_{n}$ to $\widetilde{h}_{0}$ alone would not establish
positivity of the derivatives.

\subsection{Herman's idea}

In this subsection, we present a clever idea from Michael Herman, see Chapter~II of \cite{aste}, Remark~2.3.3, p.~53. Roughly speaking, from a circle
diffeomorphism $h$, he builds a RIC $\gamma $ for a
particular standard map, for which the dynamics of the standard map
restricted to $\gamma $ is given by $h.$ We will state it in a form adequate
for our application.

\begin{lemma}\label{her}
Let $\widetilde{h}:\mathbb R\to \mathbb R$ be the lift to $\mathbb R$ of an
orientation preserving real analytic circle diffeomorphism $h.$ In
particular, $\widetilde{h}$ is also a real-analytic diffeomorphism with $%
\widetilde{h}^{\prime }>0$ and $\widetilde{h}(\widetilde{x}+1)=\widetilde{h}(%
\widetilde{x})+1$. For any $k^{*}>0$, put 
\begin{equation}
g_{\widetilde{h}}(\widetilde{x})=\frac{\widetilde{h}(\widetilde{x})+%
\widetilde{h}^{-1}(\widetilde{x})-2\widetilde{x}}{k^{*}}\text{ and }u_{%
\widetilde{h}}(\widetilde{x})=\widetilde{x}-\widetilde{h}^{-1}(\widetilde{x}%
).
\end{equation}
Then $g_{\widetilde{h}},u_{\widetilde{h}}$ are real analytic and $1$%
-periodic, $\int_{0}^{1}g_{\widetilde{h}}(\widetilde{x})\,d\widetilde{x}=0$,
and 
\[
\widetilde{f}_{k^{*},g_{\widetilde{h}}}(\widetilde{x},u_{\widetilde{h}}(%
\widetilde{x}))=(\widetilde{h}(\widetilde{x}),u_{\widetilde{h}}(\widetilde{h}%
(\widetilde{x}))).
\]
\end{lemma}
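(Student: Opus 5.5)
The proof reduces to three direct verifications: regularity and periodicity, zero mean, and the invariance identity.

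\emph{Regularity and periodicity.} Since $\widetilde{h}$ is a real-analytic diffeomorphism of $\mathbb R$ with $\widetilde{h}'>0$, the analytic inverse function theorem makes $\widetilde{h}^{-1}$ real analytic. From $\widetilde{h}(\widetilde{x}+1)=\widetilde{h}(\widetilde{x})+1$ we get $\widetilde{h}^{-1}(\widetilde{x}+1)=\widetilde{h}^{-1}(\widetilde{x})+1$. Hence both $\widetilde{h}-\operatorname{id}$ and $\widetilde{h}^{-1}-\operatorname{id}$ are real analytic and $1$-periodic. Consequently $u_{\widetilde{h}}$ and $g_{\widetilde{h}}$ are real analytic and $1$-periodic.

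\emph{Zero mean.} It suffices to show
\[
\int_0^1(\widetilde{h}(\widetilde{x})-\widetilde{x})\,d\widetilde{x}=-\int_0^1(\widetilde{h}^{-1}(\widetilde{x})-\widetilde{x})\,d\widetilde{x}.
\]
In the second integral I will substitute $\widetilde{x}=\widetilde{h}(t)$, which gives
\[
\int_{\widetilde{h}^{-1}(0)}^{\widetilde{h}^{-1}(0)+1}(t-\widetilde{h}(t))\,\widetilde{h}'(t)\,dt .
\]
The main point is to show this integral equals $-\int_0^1(\widetilde{h}-\operatorname{id})$. Writing $\phi=\widetilde{h}-\operatorname{id}$, which is periodic, the integrand is
\[
-\phi(t)(1+\phi'(t)) = -\phi(t)-\tfrac12(\phi^2)'(t).
\]
The derivative term integrates to zero over a full period, and by periodicity the remaining term integrates to $-\int_0^1\phi$ over any interval of length $1$. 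This gives the zero mean. (A geometric alternative: the region between the graph of $\widetilde{h}$ and the diagonal over a period reflects onto the corresponding region for $\widetilde{h}^{-1}$.)

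\emph{Invariance.} I will mirror the verification (\ref{eq:seed-identities})--(\ref{eq:seed-invariance}) done for Bullett's example. First, $u_{\widetilde{h}}(\widetilde{h}(\widetilde{x}))=\widetilde{h}(\widetilde{x})-\widetilde{x}$. Next,
\[
u_{\widetilde{h}}(\widetilde{x})+k^{*}g_{\widetilde{h}}(\widetilde{x})=\widetilde{x}-\widetilde{h}^{-1}(\widetilde{x})+\widetilde{h}(\widetilde{x})+\widetilde{h}^{-1}(\widetilde{x})-2\widetilde{x}=\widetilde{h}(\widetilde{x})-\widetilde{x}.
\]
So the second coordinate of $\widetilde{f}_{k^{*},g_{\widetilde{h}}}(\widetilde{x},u_{\widetilde{h}}(\widetilde{x}))$ equals $u_{\widetilde{h}}(\widetilde{h}(\widetilde{x}))$. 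The first coordinate is $\widetilde{x}$ plus this same quantity, namely $\widetilde{h}(\widetilde{x})$. This gives the identity.

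The only step that is not purely algebraic is the zero-mean computation, and it is handled by the change of variables together with periodicity.
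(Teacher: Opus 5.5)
Your proposal is correct and follows essentially the same route as the paper: the invariance is the same one-line identity $u_{\widetilde h}+k^*g_{\widetilde h}=\widetilde h-\operatorname{id}=u_{\widetilde h}\circ\widetilde h$. The zero-mean step is the same change of variables $\widetilde y=\widetilde h(\widetilde x)$, followed by writing $\widetilde h'=1+\phi'$ and discarding $\int_0^1\phi\phi'$ by periodicity. The only cosmetic differences are that you integrate directly over $[\widetilde h^{-1}(0),\widetilde h^{-1}(0)+1]$ and that you spell out the periodicity of $\widetilde h^{-1}-\operatorname{id}$, which the paper leaves implicit.
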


\begin{proof}
Analyticity and periodicity follow from the corresponding properties of $%
\widetilde{h}$ and $\widetilde{h}^{-1}$. Furthermore, 
\[
u_{\widetilde{h}}(\widetilde{x})+k^{*}g_{\widetilde{h}}(\widetilde{x})=%
\widetilde{h}(\widetilde{x})-\widetilde{x}=u_{\widetilde{h}}(\widetilde{h}(%
\widetilde{x})), 
\]
which proves invariance and identifies the induced lift as $\widetilde{h}$.

To check the mean, let $p=\widetilde{h}-\operatorname{id}$. Since $%
\widetilde{h}^{-1}(\widetilde{y})-\widetilde{y}$ is periodic, a change of
variables $\widetilde{y}=\widetilde{h}(\widetilde{x})$ gives 
\begin{align*}
\int_{0}^{1}(\widetilde{h}^{-1}(\widetilde{y})-\widetilde{y})\,d\widetilde{y}%
& =\int_{\widetilde{h}(0)}^{\widetilde{h}(0)+1}(\widetilde{h}^{-1}(%
\widetilde{y})-\widetilde{y})\,d\widetilde{y} \\
& =-\int_{0}^{1}p(\widetilde{x})\widetilde{h}^{\prime }(\widetilde{x})\,d%
\widetilde{x} \\
& =-\int_{0}^{1}p(\widetilde{x})\,d\widetilde{x}-\int_{0}^{1}p(\widetilde{x}%
)p^{\prime }(\widetilde{x})\,d\widetilde{x} \\
& =-\int_{0}^{1}p(\widetilde{x})\,d\widetilde{x},
\end{align*}
because $\int_{0}^{1}p(\widetilde{x})p^{\prime }(\widetilde{x})\,d\widetilde{%
x}=0.$ As $\widetilde{h}(\widetilde{x})-\widetilde{x}=p(\widetilde{x})$ and 
\[
\int_{0}^{1}g_{\widetilde{h}}(\widetilde{x})\,d\widetilde{x}=\frac{1}{k^{*}}%
\left[ \int_{0}^{1}(\widetilde{h}^{-1}(\widetilde{y})-\widetilde{y})\,d%
\widetilde{y}+\int_{0}^{1}(\widetilde{h}(\widetilde{x})-\widetilde{x})\,d%
\widetilde{x}\right] , 
\]
we get that $\int_{0}^{1}g_{\widetilde{h}}(\widetilde{x})\,d\widetilde{x}=0.$
\end{proof}

\section{Proof of Theorem~\ref{thm:main}}\label{sec:proof}

\hskip0.6truecm \textit{Part 1. No RICs for Bullett's example at k=0.95;}
\vskip0.1truecm

From Bullett's paper \cite{Bullett}, the closed set $A_{g_{0}}$ associated
with the function $g_0$ in \eqref{g0} avoids $]0.918,1[.$ %From expression (\ref{sessao}), $%
%\rho _{V}(\widehat{f}_{k,g_{0}})=[\rho _{V}^{-}(k,g_{0}),\rho
%_{V}^{+}(k,g_{0})]$ satisfies 
%\begin{equation}
%\rho _{V}^{-}(k,g_{0})<0<\rho _{V}^{+}(k,g_{0})\text{ for all }k\in
%[0.92,0.99].  \label{semcri}
%\end{equation}
\vskip0.2truecm
\textit{Part 2. }$C^{0}$\textit{-approximation of }$h_{0},\widetilde{h}_{0}$%
\textit{\ by a %real analytic 
pair }$\sigma _{n},\widetilde{\sigma }_{n},$\textit{
whose rotation number is Diophantine;}
\vskip0.1truecm
From Proposition~\ref{prop:positive}, consider $n_0>0$ sufficiently large such that for 
$n\geq n_0$, $\widetilde{h}_n^{\prime}(\widetilde{x})>0$ for
all $\widetilde{x}\in \mathbb{R}$. Define 
\[
M_{n}=\max_{\widetilde{x}\in [0,1]}p_{n}(\widetilde{x}). 
\]
Equations \eqref{eq:maximum} and \eqref{eq:en} imply 
\begin{equation}
|M_{n}|\le e_{n}\text{ and }p_{n}(\widetilde{x})\le e_{n}\quad \text{for all 
}\widetilde{x}\in \mathbb R.  \label{eq:Mn}
\end{equation}
Indeed $p_{n}\le p_{0}+e_{n}\le e_{n}$, whereas $p_{n}(3/4)\ge
p_{0}(3/4)-e_{n}=-e_{n}$.

Consider the family of translated lifts 
\begin{equation}
\widetilde{h}_{n,t}(\widetilde{x})=\widetilde{h}_{n}(\widetilde{x})-M_{n}-%
\frac{t}{n},\qquad 0\le t\le 1.  \label{eq:translated-family}
\end{equation}
All these maps are real-analytic diffeomorphisms, since their derivatives
are $\widetilde{h}_{n}^{\prime }>0$.

At $t=0$, the function $\widetilde{h}_{n,0}(\widetilde{x})-\widetilde{x}%
=p_{n}(\widetilde{x})-M_{n}$ has zero as a maximum, so at the $\widetilde{x}$
where this maximum is attained, $\widetilde{h}_{n,0}$ has a fixed point,
which implies that 
\begin{equation}
\rho (\widetilde{h}_{n,0})=0.  \label{eq:rot-left}
\end{equation}
For $t>0$, 
\[
\widetilde{h}_{n,t}(\widetilde{x})\le \widetilde{x}-\frac{t}{n}. 
\]
Applying this inequality repeatedly yields 
\[
\widetilde{h}_{n,t}^{\,m}(\widetilde{x})\le \widetilde{x}-\frac{mt}{n}, 
\]
and hence 
\begin{equation}
\rho (\widetilde{h}_{n,t})\le -\frac{t}{n}<0.  \label{eq:rot-right}
\end{equation}

As the rotation number is continuous in the uniform topology on lifts of orientation-preserving
homeomorphisms of the circle, $[0,1]\ni t\mapsto \rho (\widetilde{h}_{n,t})$
is continuous, and its image contains $[-1/n,0]$. Choose a Diophantine
number, for instance, 
\[
\alpha _{n}:=-\sqrt{2}/(2n)\in (-1/n,0). 
\]
By the intermediate value theorem, there exists $t_{n}\in (0,1)$ such that 
\begin{equation}
\widetilde{\sigma }_{n}:=\widetilde{h}_{n,t_{n}}\text{ satisfies }\rho (%
\widetilde{\sigma }_{n})=\alpha _{n}.  \label{eq:chosen-H}
\end{equation}
Crucially, as $M_{n}+1/n\rightarrow 0$ as $n\rightarrow \infty ,$ we get
that: 
\begin{equation}
\begin{aligned}
\|\widetilde\sigma_n-\widetilde h_0\|_\infty
&=\|\widetilde h_{n,t_n}-\widetilde h_0\|_\infty\\
&\le e_n+|M_n|+\frac1n
\le2e_n+\frac1n
\le\frac{1+16/(3\pi^2)}n.
\end{aligned}
\label{eq:H-close}
\end{equation}
Thus selection of the Diophantine rotation number is compatible with
remaining arbitrarily close to the original piecewise-linear lift $%
\widetilde{h}_{0}.$

\vskip0.2truecm

\textit{Part 3. Estimate of }$\left\| \theta _{n}-g_{0}\right\| _{\infty }$ 
\textit{and non-existence of RICs;}
\vskip0.1truecm

First, apply Herman's construction (Lemma \ref{her}) with $k^*=4/3,$ $\widetilde{h}=\widetilde\sigma_n$, and set
\begin{equation}
\theta_n(\widetilde x)=\frac{\widetilde\sigma_n(\widetilde x)
       +\widetilde\sigma_n^{-1}(\widetilde x)-2\widetilde x}{4/3} \text{ and } 
u_n(\widetilde x)=\widetilde x-\widetilde\sigma_n^{-1}(\widetilde x).
\label{eq:theta-n}
\end{equation}

Since
\[
g_0(\widetilde x)
=\frac{\widetilde h_0(\widetilde x)
       +\widetilde h_0^{-1}(\widetilde x)-2\widetilde x}{4/3},
\]
we obtain that
\[
\|\theta_n-g_0\|_\infty
\le\frac34\left(
\|\widetilde\sigma_n-\widetilde h_0\|_\infty
+\|\widetilde\sigma_n^{-1}-\widetilde h_0^{-1}\|_\infty
\right).
\]
For any $\widetilde y\in\mathbb R$, choose
$\widetilde x=\widetilde\sigma_n^{-1}(\widetilde y)$.
The $3$-Lipschitz estimate \eqref{eq:inverse-Lip} gives
\begin{align*}
\bigl|\widetilde\sigma_n^{-1}(\widetilde y)
      -\widetilde h_0^{-1}(\widetilde y)\bigr|
&=\bigl|\widetilde x
      -\widetilde h_0^{-1}(\widetilde\sigma_n(\widetilde x))\bigr|\\
&=\bigl|\widetilde h_0^{-1}(\widetilde h_0(\widetilde x))
      -\widetilde h_0^{-1}(\widetilde\sigma_n(\widetilde x))\bigr|\\
&\le3\bigl|\widetilde h_0(\widetilde x)-\widetilde\sigma_n(\widetilde x)\bigr|\\
&\le3\|\widetilde\sigma_n-\widetilde h_0\|_\infty.
\end{align*}
Taking the supremum over $\widetilde y$ yields
\[
\|\widetilde\sigma_n^{-1}-\widetilde h_0^{-1}\|_\infty
\le3\|\widetilde\sigma_n-\widetilde h_0\|_\infty.
\]
Consequently, by \eqref{eq:H-close},
\begin{equation}
\begin{aligned}
\|\theta_n-g_0\|_\infty
&\le\frac34 4\|\widetilde\sigma_n-\widetilde h_0\|_\infty\\
&=3\|\widetilde\sigma_n-\widetilde h_0\|_\infty
\le\frac{3+16/\pi^2}{n}\longrightarrow0.
\end{aligned}
\label{eq:theta-close}
\end{equation}
So, as $\widehat{f}_{0.95,g_0}$ has no RICs, Lemma \ref{noinvcurves} implies that there exists $n_{1}>n_{0}>0$ such
that for all $n\geq n_{1},$

\begin{equation}
\widehat{f}_{0.95,\theta_n} \text{ does not have RICs either.}
\end{equation}

\vskip0.2truecm
\textit{Part 4. Persistence of real-analytic RICs near $0$ and near $4/3$.}
\vskip0.1truecm
Fix a sufficiently large $n>n_1$, and write
\[
\widetilde{\sigma}=\widetilde{\sigma}_n,\qquad
\theta=\theta_n,\qquad u=u_n,\qquad \alpha=\alpha_n.
\]
By the construction above, $\widehat{f}_{4/3,\theta}$ has the
real-analytic RIC $\widehat{\Gamma}_u=\pi(\operatorname{graph}(u))$,
and its dynamics on this curve has the Diophantine rotation number
$\alpha$. From now on, $\theta$ remains fixed while $k$ varies.

We use the following standard local persistence form of the KAM theorem
for exact symplectic twist maps: 
\vskip0.1truecm
{\bf KAM Theorem.} \emph{If a real-analytic exact symplectic
strict twist map has a real-analytic RIC with Diophantine rotation number
$\alpha$, then every sufficiently small analytic exact symplectic
perturbation has a nearby real-analytic RIC with the same rotation number
$\alpha$.} 
\vskip0.2truecm
Here smallness is measured on a fixed complex neighborhood
of the original curve. This is the strict twist-map consequence of the
local KAM theorem (see, for example, \cite[Theorem~2.5]{FHL}), using
analytic linearization of the dynamics on the curve \cite{Yoccoz}.
The strict twist condition supplies the required nondegeneracy.

All these hypotheses hold here. The maps $\widehat{f}_{k,\theta}$ are
real analytic and exact symplectic, since $\theta$ has zero mean.
Moreover, their lifts satisfy the strict twist condition
\[
\frac{\partial\bigl(p_1\circ\widetilde{f}_{k,\theta}\bigr)}
{\partial\widetilde{y}}=1.
\]
Since $\theta$ is fixed and real analytic, we can choose a fixed complex
neighborhood $U$ of the lifted curve, modulo horizontal integer
translations, on which $\theta$ extends holomorphically and is bounded.
On this neighborhood,
\[
\widetilde{f}_{k,\theta}(\widetilde{x},\widetilde{y})
-\widetilde{f}_{4/3,\theta}(\widetilde{x},\widetilde{y})
=(k-4/3)\theta(\widetilde{x})\binom{1}{1}.
\]
Consequently, in the supremum norm on $U$, there is a constant $C_U>0$
independent of $k$ such that
\[
\bigl\|\widetilde{f}_{k,\theta}
-\widetilde{f}_{4/3,\theta}\bigr\|_U
\le C_U|k-4/3|\longrightarrow0
\qquad\text{as }k\longrightarrow4/3.
\]
The persistence theorem therefore gives $\delta_{4/3}>0$ such that, for every
$|k-4/3|<\delta_{4/3}$, the map $\widehat{f}_{k,\theta}$ has a real-analytic
RIC with rotation number $\alpha$. Thus
\begin{equation}
(4/3-\delta_{4/3},4/3+\delta_{4/3})\subset A_\theta.
\label{eq:persistent-interval}
\end{equation}
%In particular, the closed interval
%$[4/3-\delta/2,4/3+\delta/2]$ is contained in $A_\theta$ and has $4/3$
%in its interior.

To obtain an interval around $0$, observe that
\[
\widetilde{f}_{0,\theta}(\widetilde{x},\widetilde{y})
=(\widetilde{x}+\widetilde{y},\widetilde{y}).
\]
Thus the horizontal circle
$\pi(\mathbb R\times\{\alpha\})$ is a real-analytic RIC with the same
Diophantine rotation number $\alpha$. The integrable map
$\widehat{f}_{0,\theta}$ is exact symplectic and satisfies the strict
twist condition. Moreover,
\[
\widetilde{f}_{k,\theta}(\widetilde{x},\widetilde{y})
-\widetilde{f}_{0,\theta}(\widetilde{x},\widetilde{y})
=k\theta(\widetilde{x})\binom{1}{1},
\]
so these maps converge to $\widehat{f}_{0,\theta}$ in the analytic norm
on a fixed complex neighborhood of this horizontal circle as $k\to0$.
The same persistence theorem gives $\delta_0>0$ such that every
$\widehat{f}_{k,\theta}$ with $|k|<\delta_0$ has a real-analytic RIC
with rotation number $\alpha$. Hence
\begin{equation}
(-\delta_0,\delta_0)\subset A_\theta.
\label{eq:persistent-zero-interval}
\end{equation}
%The latter is a closed interval containing $0$ in its interior.

Finally, let $I_0$ and $I_{4/3}$ be the connected components of $A_\theta$
containing $0$ and $4/3$, respectively. Part~3 implies that
$\theta\not\equiv0$, since $\widehat{f}_{k,0}$ has RICs for every $k$.
By item~\ref{item:parameter-set}, $A_\theta$ is compact.
Thus these components are compact intervals, and the neighborhoods just obtained
show that $0\in\operatorname{int}(I_0)$ and
$4/3\in\operatorname{int}(I_{4/3})$.
They are distinct: by Part~3, $0.95\notin A_\theta$.
%whereas a connected subset of $\mathbb R$ containing both $0$ and $4/3$
%would contain this entire gap. 
This proves Theorem~\ref{thm:main}. $\qed$

\begin{remark}
Although $\widetilde{\sigma }_{n}-\operatorname{id}$ is a trigonometric
polynomial plus a constant, $\theta _{n}$ also involves $\widetilde{\sigma }%
_{n}^{-1}$. The conclusion is real analyticity of $\theta _{n}$; no claim
that $\theta _{n}$ is a trigonometric polynomial is needed.
\end{remark}

\end{document}